\newtheorem{theorem}{Theorem}[section]
\newtheorem{lemma}[theorem]{Lemma}
\newtheorem{question}{Question}
\newtheorem{corollary}[theorem]{Corollary}
\newtheorem{definition}{Definition\rm}
\newcounter{paraga}[section]
\renewcommand{\theparaga}{{\bf\arabic{paraga}.}}
\newcommand{\paraga}{\medskip \addtocounter{paraga}{1} 
\noindent{\theparaga\ } }
\begin{document}

\def\MP{\,{<\hspace{-.5em}\cdot}\,}
\def\SP{\,{>\hspace{-.3em}\cdot}\,}
\def\PM{\,{\cdot\hspace{-.3em}<}\,}
\def\PS{\,{\cdot\hspace{-.3em}>}\,}
\def\EP{\,{=\hspace{-.2em}\cdot}\,}
\def\PP{\,{+\hspace{-.1em}\cdot}\,}
\def\PE{\,{\cdot\hspace{-.2em}=}\,}
\def\N{\mathbb N}
\def\C{\mathbb C}
\def\Q{\mathbb Q}
\def\R{\mathbb R}
\def\T{\mathbb T}
\def\A{\mathbb A}
\def\Z{\mathbb Z}
\def\demi{\frac{1}{2}}

\begin{titlepage}
\author{Pierre Berger~\footnote{pierre.berger@normalesup.org, CNRS and IMPA, Estrada Dona Castorina 110, Rio de Janeiro, Brasil, 22460-320} {} and Abed Bounemoura~\footnote{abed@impa.br, IMPA, Estrada Dona Castorina 110, Rio de Janeiro, Brasil, 22460-320}}
\title{\LARGE{\textbf{A geometrical proof of the persistence of normally hyperbolic submanifolds}}}
\end{titlepage}

\maketitle

\begin{abstract}
We present a simple, computation free and geometrical proof of the following classical result: for a diffeomorphism of a manifold, any compact submanifold which is invariant and normally hyperbolic persists under small perturbations of the diffeomorphism. The persistence of a Lipschitz invariant submanifold follows from an application of the Schauder fixed point theorem to a graph transform, while smoothness and uniqueness of the invariant submanifold are obtained through geometrical arguments. Moreover, our proof provides a new result on persistence and regularity of ``topologically" normally hyperbolic submanifolds, but without any uniqueness statement.
\end{abstract}
 
\section{Introduction}

\paraga Let $M$ be smooth manifold, $f : M \rightarrow M$ a $C^1$-diffeomorphism and $N\subseteq M$ a $C^1$-submanifold invariant by $f$. Roughly speaking, $f$ is \emph{normally hyperbolic} at $N$ if the tangent map $Tf$, restricted to the normal direction to $N$, is hyperbolic (it expands and contracts complementary directions) and if it dominates the restriction of $Tf$ to the tangent direction $TN$ (that is, expansion and contraction in the tangent direction, if any, are weaker than those in the normal direction). A precise definition will be given below.

The importance of invariant normally hyperbolic submanifolds, both in theoretical and practical aspects of dynamical systems, is well-known and it does not need to be emphasised. Let us just point out that quite recently, they have acquired a major role in establishing instability properties for Hamiltonian systems which are close to integrable, a problem which goes back to the question of the stability of the solar system.

\paraga It has been known for a long time that compact invariant normally hyperbolic submanifolds are \emph{persistent}, in the following sense: any diffeomorphism $g : M \rightarrow M$, sufficiently close to $f$ in the $C^1$-topology, leaves invariant and is normally hyperbolic at a submanifold $N_g$ $C^1$-close to $N$. Classical references for this result are \cite{HPS77} and \cite{Fen71}. 

In fact, normally hyperbolic submanifolds are persistent and \emph{uniformly locally maximal}: there exist neighbourhoods $U$ of $N$ in $M$ and $\mathcal{U}$ of $f$ in the space $\textrm{Diff}^1(M)$ of $C^1$-diffeomorphisms of $M$, such that for any $g\in\mathcal{U}$, $N_g=\bigcap_{k\in\Z}g^k(U)$ is a $C^1$-submanifold close to $N$, with $N_f=N$. The latter property implies uniqueness of the invariant submanifold.

The converse statement holds true: assuming $N$ is persistent and uniformly locally maximal, it was shown in \cite{Man78} that $N$ has to be normally hyperbolic.  

In the case where $N$ is a point, then it is a hyperbolic fixed point and the persistence follows trivially from the implicit function theorem. In the general case, however, such a direct approach is not possible and it is customary to deduce the persistence of compact normally hyperbolic submanifolds from the existence and persistence of the associated local stable (respectively unstable) manifolds, which are located in a neighbourhood of $N$ and are tangent to the sum of the contracting (respectively expanding) and tangent direction to $N$. The existence and persistence of stable and unstable manifolds have a long history, that we shall go through only very briefly. In the case of a fixed point, this was first proved by Hadamard (\cite{Had01}) who introduced the so-called ``graph transform" method, which relies on the contraction principle. Another proof, based on the implicit function theorem, was later given by Perron (\cite{Per28}), which was subsequently greatly simplified by Irwin (\cite{Irw70}). For normally hyperbolic submanifolds which are not reduced to a point, the result was proved independently in \cite{HPS77} and \cite{Fen71}, using the graph transform method. Moreover, in \cite{HPS77}, results on persistence were obtained not only for normally hyperbolic submanifolds but also in the more general context of normally hyperbolic laminations. This result was then clarified and further generalised in \cite{Ber10} and \cite{Ber11}, where not only laminations but also certain stratifications of normally hyperbolic laminations were shown to be persistent. As a final remark, let us point out that the graph transform method has been successfully applied for semi-flows in infinite dimension (\cite{BLZ98}), with a view towards applications to partial differential equations. 

\paraga The aim of this work is to give yet another proof of the persistence of compact normally hyperbolic submanifolds, a proof which we believe to be simpler. We will also use a graph transform, but at variance with all other proofs, we will not rely on the contraction principle. As a consequence, we will avoid any technical estimates that are usually required to show that the graph transform is indeed a contraction (on an appropriate Banach space). Also, we will be dispensed with giving the explicit, and usually rather cumbersome, expression of the graph transform acting on a suitable space of sections of a vector bundle. Instead, we will simply use the Schauder fixed point theorem to obtain the existence (but not the uniqueness) of the invariant submanifold. Such an invariant submanifold will be shown, at first, to be not more than Lipschitz regular. Then, to regain smoothness, we will use very simple geometrical properties of cone fields implied by the domination hypothesis. Compared to other proofs, and especially \cite{HPS77} where complicated techniques of ``Lipschitz jets" are used, our approach here is remarkably simple. 

Moreover, our proof yields a new result since the existence and regularity works for a wider class of submanifolds which we call ``topologically" normally hyperbolic, where basically we will retain a suitable domination property but the normal contraction and expansion will be replaced by topological analogues (see below for a precise definition). Under this weaker assumption no result of uniqueness has to be expected. In the classical normally hyperbolic case, using some other simple geometrical arguments (where the contraction property of the graph transform is hidden behind), the uniqueness will be established. 

The plan of the paper is the following. We state and explain the theorems for normally hyperbolic submanifolds in section~\ref{s2}, and for topologically normally submanifolds in section~\ref{s3}. The proofs of the results are given in section~\ref{s4}. 

\section{Normally hyperbolic submanifolds}\label{s2}

\paraga Let us now detail the setting that we shall use in the formulation of the theorem below. The Riemannian manifold $M$ is $m$-dimensional and smooth ({\it i.e.} $C^\infty$). The diffeomorphism $f : M \rightarrow M$ is (at least) of class $C^1$. The submanifold  $N\subseteq M$ is of class $C^1$, $n$-dimensional and \emph{closed}, that is without boundary, compact and connected. We suppose that  $f$ leaves $N$ \emph{invariant}: $f(N)=N$.

\begin{definition} The diffeomorphism $f$ is normally hyperbolic at $N$ if there exist a splitting of the tangent bundle of $M$ over $N$ into three $Tf$-invariant subbundles:
\[ TM_{|N}=E^s \oplus E^u \oplus TN  \]
and a constant $0<\lambda<1$ such that for all $x\in N$, with $||.||$ the operator norm induced by the Riemannian metric:
\begin{equation}\label{hyp}
||T_xf_{|E_x^s}|| < \lambda, \quad ||T_xf_{|E_x^u}^{-1}|| < \lambda, 
\end{equation} 
and
\begin{equation}\label{dom}
\begin{cases}
||T_xf_{|E_x^s}||\cdot||T_{f(x)}f_{|T_{f(x)}N}^{-1}|| < \lambda  \\
 ||T_xf_{|E_x^u}^{-1}||\cdot||T_{f^{-1}(x)}f_{|T_{f^{-1}(x)}N}|| < \lambda. 
\end{cases} 
\end{equation}
\end{definition}
One can check that the continuity of the splitting is then automatic. Condition~(\ref{hyp}) means that the normal behaviour of $Tf$ is hyperbolic while condition~(\ref{dom}) expresses the domination property with respect to the tangent behaviour of $Tf$ (in fact, condition~(\ref{dom}) can be expressed in many equivalent ways). If $E^u=\{0\}$ (respectively $E^s=\{0\}$), then $N$ is normally contracted (respectively normally expanded). Note that if the above conditions are satisfied not for $f$ but only for some iterate of $f$, then there exists another Riemannian metric (called \emph{adapted}) for which these two conditions hold true for $f$ (see \cite{Gou07} for the case of a general dominated splitting).

\paraga We endow the space of $C^1$-maps between $C^1$-manifolds with the compact-open topology (see \cite{Hir76}). This naturally defines a topology on the subset $\mathrm{Diff}^1(M)$ of $C^1$-diffeomorphisms of $M$. Moreover, two $C^1$-diffeomorphic submanifolds $N$ and $N'$ of $M$ are \emph{$C^1$-close} if there exists an embedding $i'$ of $N$ onto $N'$ which is $C^1$-close to the canonical inclusion $i : N \hookrightarrow M$. We are now ready to state the classical theorem.

\begin{theorem}\label{thmhps}
Let $f$ be a $C^1$-diffeomorphism which leaves invariant and is normally hyperbolic at a closed $C^1$-submanifold $N$. 
Then there exists a neighbourhood $\mathcal{U}$ of $f$ in $\mathrm{Diff}^1(M)$ such that any $g\in\mathcal{U}$ leaves invariant and is normally hyperbolic at a $C^1$-submanifold $N_g$, diffeomorphic and $C^1$-close to $N$. Moreover, $N_g$ is unique and uniformly locally maximal.
\end{theorem}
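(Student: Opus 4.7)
My plan follows the roadmap given in the introduction: tubular chart, graph transform, Schauder, then geometric upgrades. First I would choose a tubular neighbourhood $U$ of $N$ in $M$ and identify it with a disc bundle inside the normal bundle $E^s\oplus E^u \to N$, so that $N$ becomes the zero section. The splitting $TM_{|N}=E^s\oplus E^u\oplus TN$ extends to a continuous splitting over $U$, and for $g$ sufficiently $C^1$-close to $f$ the derivative $Tg$ still contracts $E^s$, expands $E^u$, and dominates any behaviour along $TN$, with slightly worse but still uniform constants $\lambda'<1$.

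For existence, let $\mathcal{S}$ be the set of continuous sections $\sigma:N\to U$ whose graph is tangent to a cone field $C^{cu}$ of aperture $K$ around $TN\oplus\{0\}$, with $\|\sigma\|_{C^0}$ uniformly bounded. This set is convex and, by Arzelà--Ascoli, compact in the $C^0$-topology. Define a graph transform $\Gamma_g(\sigma)$ by $\mathrm{graph}(\Gamma_g(\sigma))=g(\mathrm{graph}(\sigma))$, which is legitimate because the composition of the projection onto $N$ with $g$ acting on the graph is a $C^1$-perturbation of $f_{|N}$, hence a diffeomorphism of $N$. The domination hypothesis~(\ref{dom}) is exactly what ensures that $Tg$ maps the cone field $C^{cu}$ strictly inside itself, so that $\Gamma_g$ preserves $\mathcal{S}$, and $\Gamma_g$ is clearly continuous for the $C^0$-topology. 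Schauder's fixed point theorem then yields a section $\sigma_g$ with $\Gamma_g(\sigma_g)=\sigma_g$, and $N_g:=\mathrm{graph}(\sigma_g)$ is a $g$-invariant Lipschitz submanifold $C^0$-close to $N$.

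The main obstacle is the regularity step, and this is where I expect the geometry of cone fields to really carry the argument. At every point $y\in N_g$, I would consider the sequence of $n$-planes $P_k(y):=Tg^{k}\bigl(T_{g^{-k}(y)}N_g\bigr)$ obtained by pushing forward tangent planes along the backward orbit of $y$. The inclusion of each $P_k(y)$ in the cone $C^{cu}$ together with the strict cone contraction coming from~(\ref{dom}) forces the diameter of the family $\{P_j(y)\}_{j\ge k}$ in the Grassmannian to tend to zero uniformly in $y$, so the $P_k(y)$ converge to a unique plane $P_\infty(y)$. Because $y\mapsto P_k(y)$ is continuous for each $k$ (by continuity of $N_g$ and $Tg$) and the convergence is uniform, $P_\infty$ is continuous in $y$, and it must agree with any tangent plane to $N_g$ in the Lipschitz sense; this identifies $P_\infty$ as $TN_g$ and simultaneously gives continuity of this distribution, so $N_g$ is $C^1$. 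The same argument, applied with the cone condition at the reference diffeomorphism, gives $C^1$-closeness of $N_g$ to $N$ and shows that $g\mapsto N_g$ is continuous. Normal hyperbolicity of $g$ at $N_g$ then follows from openness of conditions~(\ref{hyp})--(\ref{dom}) and the $C^1$-closeness of $N_g$ to $N$.

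Finally, for uniqueness and uniform local maximality, I would argue by backward iteration using the complementary cone field $C^{su}$ around $E^s\oplus E^u$. Any $g$-invariant compact set $K\subset U$ satisfying $K=\bigcap_{k\in\Z} g^k(U)$ sits above $N$ in the tubular chart, and for each $y\in K$ lying over a given $x\in N$ one may form, together with the point of $N_g$ in the same fibre, a secant vector lying in $C^{su}$; the hyperbolicity estimates~(\ref{hyp}) combined with the domination~(\ref{dom}) force this secant to be stretched by a factor at least $(\lambda')^{-1}$ under $Tg$ or $Tg^{-1}$ relative to horizontal distances, so iterating in the appropriate direction eventually pushes the pair out of $U$ unless the secant vanishes. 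This gives $K=N_g$, hence both uniqueness of the invariant submanifold among $C^0$-close ones and uniform local maximality.
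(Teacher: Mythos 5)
Your existence step contains a genuine gap that breaks the argument whenever $E^u\neq\{0\}$. You define the graph transform on sections $\sigma:N\to U$ of the \emph{full} normal disc bundle and claim that $\Gamma_g$ preserves the set $\mathcal{S}$ of sections with $\|\sigma\|_{C^0}$ uniformly bounded, invoking only the cone invariance coming from~(\ref{dom}). But the cone condition controls the \emph{slope} of the graph, not its \emph{height}: the normal expansion in~(\ref{hyp}) stretches the $E^u$-component of $\sigma$ by a factor of roughly $\lambda^{-1}>1$ at each application of $g$, so $\|\Gamma_g(\sigma)\|_{C^0}$ can exceed any prescribed bound and the image graph leaves $U$. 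Already for a hyperbolic fixed point $f(x,y)=(x/2,2y)$ with $U=[-\delta,\delta]^2$, your transform is just $f$ itself and $f(U)\not\subseteq U$, so there is no invariant compact convex set to which Schauder applies. This is precisely why the paper does not run a single graph transform over $N$: it constructs the local \emph{unstable} manifold as the graph of a section over the unstable disc bundle $B^u$ (so that the only escape under $g$ is through $\partial^u B$, which is harmless after intersecting with $B$), the local \emph{stable} manifold symmetrically as a graph over $B^s$ using $g^{-1}$, and recovers $N_g$ as the transverse intersection $N^u_g\pitchfork N^s_g$. Your scheme works as stated only in the normally contracted (or, after inverting $g$, normally expanded) case.

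The remaining steps are closer to the paper. Your regularity argument via pushed-forward planes shrinking in the Grassmannian is essentially the paper's $D_z=\bigcap_{k\ge0}T_{g^{-k}(z)}g^k(C^s_{g^{-k}(z)})$, except that you push forward ``tangent planes'' of a manifold that is so far only Lipschitz; you need to phrase this with tangent cones and then invoke a statement like the paper's Lemma~\ref{diff} to convert containment of the tangent cone in an $n$-plane into differentiability. Your uniqueness argument via secants in the complementary cone is morally the paper's Step~2, though the paper makes it rigorous by pulling back the fibre discs $F_\zeta$ through a point of $\bigcap_{k\ge0}g^k(U)$, checking via~(\ref{hyptop}) and~(\ref{domtop}) that these preimages stay in $B$ with uniformly bounded diameter, and then contracting them; a literal ``secant vector acted on by $Tg$'' needs such a mean-value-type justification. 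Once you repair the existence step by splitting into stable and unstable graph transforms, the rest of your outline can be carried through.
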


Let us point out that we will actually show the stable and unstable manifolds theorem: for any $g\in \mathcal{U}$, we will construct a local stable manifold $N^s_g$ (respectively a local unstable manifold $N^u_g$), $C^1$-close to $N^s_f$ (respectively $N^u_f$), the latter being the set of points whose forward (respectively backward) orbit lies in a small neighbourhood of $N$. Then the normally hyperbolic submanifold $N_g$ will be obtained as the (transverse) intersection between $N^s_g$ and $N^u_g$. 

\paraga Given any $r\geq 1$, we can replace condition~(\ref{dom}) by the stronger condition  
\begin{equation}\label{domr}
\begin{cases}
||T_xf_{|E_x^s}||\cdot||T_{f(x)}f_{|T_{f(x)}N}^{-1}||^k < \lambda \\
||T_xf_{|E_x^u}^{-1}||\cdot||T_{f^{-1}(x)}f_{|T_{f^{-1}(x)}N}||^k < \lambda 
\end{cases} \quad 1\leq k \leq r.
\end{equation}
If $f$ satisfies~(\ref{hyp}) and~(\ref{domr}), then it is \emph{$r$-normally hyperbolic} at $N$ (hence normally hyperbolic is just $1$-normally hyperbolic). Note that if condition~(\ref{hyp}) is satisfied, then it is sufficient to require condition~(\ref{domr}) only for $k=r$.

In this case, if $N$ and $f$ are $C^r$, using a trick from \cite{HPS77}, for $g$ in a $C^r$-neighbourhood of $f$, the invariant submanifold $N_g$ enjoys more regularity. Indeed, let $G_n(TM) \rightarrow M$ be the Grassmannian bundle with fibre at $x\in M$ the Grassmannian of $n$-planes of $T_x M$. The tangent map $Tf$ induces a canonical $C^{r-1}$-diffeomorphism $Gf$ of $G_n(TM)$. Moreover, the tangent bundle $TN$ can be considered as a closed $C^{r-1}$-submanifold of $G_n(TM)$ (although $TN$ is not compact as a submanifold of $TM$). As $f$ is $r$-normally hyperbolic at $N$, $Gf$ is $(r-1)$-normally hyperbolic at $TN$. By induction on $r\geq 1$, the existence and the uniqueness given by Theorem~\ref{thmhps} yields the following corollary.

\begin{corollary}\label{corhps}
For any integer $r\geq 1$, let $f$ be a $C^r$-diffeomorphism which leaves invariant and is $r$-normally hyperbolic at a closed $C^r$-submanifold $N$. Then there exists a neighbourhood $\mathcal{U}$ of $f$ in $\mathrm{Diff}^r(M)$ such that any $g\in\mathcal{U}$ leaves invariant and is $r$-normally hyperbolic at a $C^r$-submanifold $N_g$, diffeomorphic and $C^r$-close to $N$. Moreover, $N_g$ is unique and uniformly locally maximal.
\end{corollary}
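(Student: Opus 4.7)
The plan is to proceed by induction on $r \geq 1$, using the Grassmannian bundle trick sketched by the authors. The base case $r=1$ is precisely Theorem \ref{thmhps}, so I need only carry out the inductive step: assuming the corollary is known at level $r-1 \geq 1$, deduce it at level $r$. The idea is that persistence plus regularity for $Gf$ acting on $G_n(TM)$ at level $r-1$ bootstraps persistence plus $C^r$-regularity for $f$ acting on $M$ at level $r$, while uniqueness is imported from the $C^1$-case via Theorem \ref{thmhps}.

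I would first set up the lift. Since $f$ is $C^r$, its tangent map $Tf$ is $C^{r-1}$, so the induced Grassmannian map $Gf : G_n(TM) \to G_n(TM)$, sending $(x,P)$ to $(f(x), T_xf(P))$, is a $C^{r-1}$-diffeomorphism. The tangent bundle $TN$ is a closed $C^{r-1}$-submanifold of $G_n(TM)$, $Gf$-invariant because $f(N)=N$. The key computation is then to check that $Gf$ is $(r-1)$-normally hyperbolic at $TN$. At a point $(x,T_xN)$, the tangent space of $G_n(TM)$ splits into a horizontal piece isomorphic to $T_xM = E^s_x \oplus E^u_x \oplus T_xN$ and a vertical piece $\mathrm{Hom}(T_xN, T_xM/T_xN)$, and $T(Gf)$ respects the natural splitting into a ``new stable'' subbundle $E^s \oplus \mathrm{Hom}(TN,E^s)$, a ``new unstable'' subbundle $E^u \oplus \mathrm{Hom}(TN,E^u)$, and the tangent subbundle $T(TN)$. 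On the factor $\mathrm{Hom}(TN,E^s)$ the action is $\phi \mapsto Tf_{|E^s}\circ\phi\circ(Tf_{|TN})^{-1}$, whose operator norm is bounded by $\|Tf_{|E^s}\|\cdot\|Tf_{|TN}^{-1}\|$. Multiplying by $\|Tf_{|TN}^{-1}\|^{k}$ for $1 \leq k \leq r-1$ and using the $r$-normal hyperbolicity estimates~(\ref{domr}) of $f$ at $N$ with exponent $k+1 \leq r$ yields the desired estimates for $Gf$ at $TN$; the symmetric unstable bound is analogous.

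Next, I would apply the induction hypothesis. Since $g \mapsto Gg$ is continuous as a map $\mathrm{Diff}^r(M) \to \mathrm{Diff}^{r-1}(G_n(TM))$, I can choose a $C^r$-neighbourhood $\mathcal{U}$ of $f$ small enough that (i) Theorem \ref{thmhps} applies to every $g\in\mathcal{U}$ and yields the unique $C^1$-submanifold $N_g$, and (ii) $Gg$ lies in the $C^{r-1}$-neighbourhood $\mathcal{V}$ of $Gf$ provided by the inductive hypothesis, producing a unique $Gg$-invariant $C^{r-1}$-submanifold $\widetilde N$ of $G_n(TM)$, $C^{r-1}$-close to $TN$. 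Now $TN_g$ is a $C^0$-submanifold of $G_n(TM)$, invariant under $Gg$ and $C^0$-close to $TN$ because $N_g$ is $C^1$-close to $N$. The uniqueness statement from the inductive hypothesis therefore forces $TN_g = \widetilde N$, so $TN_g$ is $C^{r-1}$, which is exactly the statement that $N_g$ is a $C^r$-submanifold, $C^r$-close to $N$. Uniqueness and uniform local maximality of $N_g$ are already provided by Theorem \ref{thmhps}; $r$-normal hyperbolicity of $g$ at $N_g$ follows because the invariant splitting for $g$ over $N_g$ is $C^0$-close to that of $f$ over $N$ and the strict inequalities (\ref{hyp}), (\ref{domr}) are open conditions on a compact set.

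The main obstacle is genuinely in the normal-hyperbolicity computation of Step 2: one must exhibit a continuous $T(Gf)$-invariant splitting of $T G_n(TM)$ over $TN$ and verify that the new ``vertical'' hyperbolic directions $\mathrm{Hom}(TN, E^{s,u})$ cost exactly one extra factor of $\|Tf_{|TN}^{-1}\|$ (respectively $\|Tf_{|TN}\|$) in the domination estimates — this is precisely why $r$-normal hyperbolicity of $f$ at $N$ degrades to $(r-1)$-normal hyperbolicity of $Gf$ at $TN$, and why condition~(\ref{domr}) is imposed with the exponent $k=r$. All remaining ingredients (continuity of $g\mapsto Gg$, openness of the hyperbolicity conditions, matching $TN_g$ with $\widetilde N$ by uniqueness) are routine.
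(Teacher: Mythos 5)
Your proposal is correct and follows exactly the route the paper sketches in the paragraph preceding Corollary~\ref{corhps}: induction on $r$ via the Grassmannian lift $Gf$ of $G_n(TM)$, checking that $r$-normal hyperbolicity of $f$ at $N$ gives $(r-1)$-normal hyperbolicity of $Gf$ at $TN$, and using the uniqueness statement to identify $TN_g$ with the invariant submanifold of $Gg$. You in fact supply more detail than the paper does (the splitting of $TG_n(TM)$ over $TN$ and the $\mathrm{Hom}(TN,E^{s,u})$ estimates), and these details are accurate.
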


Actually the assumption that $N$ is of class $C^r$ is automatic. In \cite{HPS77}, it is proved that if a $C^r$-diffeomorphism leaves invariant and is $r$-normally hyperbolic at a closed $C^1$-submanifold $N$, then $N$ is actually of class $C^r$. 

\section{Topologically normally hyperbolic submanifolds}\label{s3}

To prove the persistence of normally hyperbolic submanifolds, we will use a geometric model which can be satisfied without being normally hyperbolic. Therefore this enables us to weaken the assumptions on normal hyperbolicity to obtain a new result of persistence. The submanifolds satisfying such a geometric model will be called topologically normally hyperbolic.

\paraga We first explain the geometric model in a simple case. We consider a closed manifold $N$, that we identify to the submanifold $N\times \{0\}$ of 
\[ V=N\times \mathbb R^s\times \mathbb R^u,\] 
with $s,u\geq 0$. Given two compact, convex neighbourhoods $B^s$ and $B^u$ of $0$ in respectively $\mathbb R^s$ and $\mathbb R^u$, we define the following compact neighbourhood of $N$ in $V$:
\[ B = N\times B^s \times B^u, \]
and the subsets
\[ \partial^sB = N\times \partial B^s \times B^u, \quad \partial^u B = N\times B^s\times \partial B^u, \] 
where $\partial B^s$ (respectively $\partial B^u$) is the boundary of $B^s$ (respectively $B^u$). 

We assume that $f$ is a $C^1$-embedding of $B$ into $V$, that is a $C^1$-diffeomorphism of $B$ onto its image in $V$, which leaves $N$ invariant. Then we also define the following cones for $z\in V$:
\[C_z^u=\{v=v_1+v_2\in T_zV \; | \; v_1\in T_zN\times \mathbb R^s, v_2\in \mathbb R^u,\|v_2\|_z\leq\|v_1\|_z\},\]
\[C_z^s=\{v=v_1+v_2\in T_zV \; | \; v_1\in T_zN\times \mathbb R^u, v_2\in \mathbb R^s, \|v_2\|_z\leq\|v_1\|_z\},\]
with respect to a Riemannian metric on $V$.  

\begin{definition} 
Under the above assumptions, $f$ is topologically normally hyperbolic at $N$ if it satisfies the following conditions:
\begin{equation}\label{hyptop}
f(B) \cap \partial^sB=\emptyset, \quad B \cap f(\partial^uB)=\emptyset, \tag{$1'$} 
\end{equation} 
\begin{equation}\label{domtop}
T_zf(C_z^s)\subset \mathring C_{f(z)}^s\cup \{0\}, \quad C_{f(z)}^u\subset T_zf(\mathring C_z^u)\cup \{0\}, \tag{$2'$} 
\end{equation} 
for every $z\in B$.
\end{definition} 

Condition~(\ref{hyptop}) is equivalent to the requirement that $f(B)$ intersects the boundary of $B$ at most at $\partial^u B$, and that $f^{-1}(B)$ intersects the boundary of $B$ at most at $\partial^s B$. Moreover, by condition~(\ref{domtop}), we have the following transversality properties: for all $\xi_s\in \R^s$ and $\xi_u\in \R^u$, the image of the ``horizontal" $N\times\{\xi_s\}\times \R^u$ by $f$ intersects transversely each ``vertical" $N\times \R^s \times\{\xi_u\}$, and similarly, the image of $N\times \R^s \times\{\xi_u\}$ by $f^{-1}$ intersects transversely $N\times\{\xi_s\}\times \R^u$. The situation is depicted in figure~\ref{boite}. 
\begin{figure}[t] 
\centering
\def\JPicScale{0.7}
\input{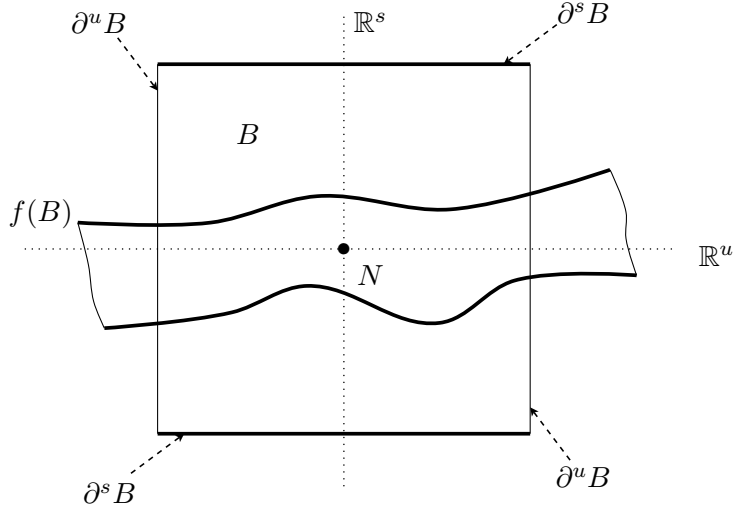}
\caption{A topologically normally hyperbolic submanifold} \label{boite}
\end{figure}  

When $u=0$, then $N$ is topologically normally contracted, and the characterisation is  simpler: $\partial B=\partial^sB$, the second half of condition~(\ref{hyptop}) is empty, the first half reads $f(B) \cap \partial B=\emptyset$ which can be seen to be equivalent to $f(B) \subseteq \mathring{B}$. A similar characterisation holds if $N$ is topologically normally expanded, that is when $s=0$.

Given a normally hyperbolic submanifold for which the stable and unstable bundles are trivial, we will construct a $C^1$-conjugacy of a neighbourhood of $N$ with such a geometric model. Basically, condition~(\ref{hyp}) will give condition~(\ref{hyptop}) and conditions~(\ref{hyp}) and~(\ref{dom}) will give condition~(\ref{domtop}). 

\paraga Now we shall generalise the concept of topological normal hyperbolicity, to include in particular normally hyperbolic submanifolds for which the stable and unstable bundle are not necessarily trivial. 

Let $V^s$ and $V^u$ be two vector bundles over a closed manifold $N$, whose fibres are of dimension $s$ and $u$, with $s,u\geq 0$. We denote by 
\[\pi:\; V=V^s\oplus V^u\rightarrow N\] 
the vector bundle whose fibre at $x\in N$ is the direct sum $V_x=V_x^s\oplus V_x^u$ of the fibres of $V^s$ and $V^u$ at $x\in N$. A \emph{horizontal distribution} for $\pi : V\rightarrow N$ is a smooth family of $n$-planes $(H_z)_{z\in V}$ such that $T_zV=H_z\oplus \ker T_z\pi$. Recall that a \emph{local trivialisation} of the vector bundle $\pi : V^s\oplus V^u\rightarrow N$ is an open set $W$ of $N$ and a diffeomorphism $\phi : \pi^{-1}(W)\rightarrow W\times \mathbb R^s\times \mathbb R^u$ such that its restriction to any fibre $\pi^{-1}(x)=V_x^s\oplus V_x^u$ is a linear automorphism onto $\{x\}\times \mathbb R^s\times \mathbb R^u$. A horizontal distribution $H$ is \emph{linear} if for any $z\in V_x^s\oplus V_x^u$, there exists a local trivialisation $\phi$ over a neighbourhood $W$ of $x$ such that $T_z\phi(H_z)= T_xW \times \{0\}$. Linear horizontal distribution exists on any vector bundle (the construction follows from the existence of a linear connection and parallel transport, see \cite{GHV73}).

Let $B^s\rightarrow N$ and $B^u\rightarrow N$ be two bundles whose fibres $B_x^s$ and $B_x^u$ at $x\in N$ are convex, compact neighbourhoods of $0$ in $V_x^s$ and $V_x^u$ respectively. Then we can define other bundles over $N$, namely $B= B^s \oplus B^u$, $\partial^sB = \partial B^s \oplus B^u$ and $\partial^u B = B^s\oplus \partial B^u$, where the fibre of $\partial B^s$ at $x \in N$ (respectively $\partial B^u$) is the boundary of $B_x^s$ (respectively $B_x^u$). Note that $B$, $\partial^sB$, and $\partial^u B$ are subbundles of $V$, and that $B$ is a compact neighbourhood of the graph of the zero section of $\pi : V\rightarrow N$.  

Let $f$ be a $C^1$-embedding of $B$ into $V$. Identifying $N$ to the graph of the zero section of $\pi : V\rightarrow N$, we assume that $f$ leaves $N$ invariant.

Given a linear horizontal distribution $H$ for $\pi : V\rightarrow N$ and a Riemannian metric on $V$, we define the following cones for $z\in V$:
\[C_z^u=\{v=v_1+v_2\in T_zV \; | \; v_1\in H_z\oplus V_{\pi(z)}^{s}, v_2\in V_{\pi(z)}^{u},\|v_2\|_z\leq\|v_1\|_z\},\]
\[C_z^s=\{v=v_1+v_2\in T_zV \; | \; v_1\in H_z\oplus V_{\pi(z)}^{u}, v_2\in V_{\pi(z)}^{s}, \|v_2\|_z\leq\|v_1\|_z\},\]
where we have identified the fibres $V_{\pi(z)}^{s}$ and $V_{\pi(z)}^{u}$ with their tangent spaces. Let us note that these cone fields are \emph{smooth} in the sense that $H_z, V_{\pi(z)}^{s}$ and $V_{\pi(z)}^{u}$ depend smoothly on $z\in B$.  

\begin{definition}\label{defgen} 
Under the above assumptions, $f$ is topologically normally hyperbolic at $N$ if it satisfies conditions~(\ref{hyptop}) and~(\ref{domtop}).
\end{definition}

This is clearly a generalisation of the simple geometric model, since the latter correspond to $V^s=N\times \R^s$, $V^u=N\times \R^s$ and there is a canonical linear horizontal distribution for the trivial vector bundle $\pi : V= N\times \R^s \times \R^u \rightarrow N$, given by $H_z=T_{\pi(z)}N \times \{0\}$ for $z\in V$.

\paraga Let us denote by $\mathrm{Emb}^1(B,V)$ the space of $C^1$-embeddings of $B$ into $V$, endowed with the $C^1$-topology. Here is the new result on persistence:

\begin{theorem}\label{thmhpstop}
Let $f\in\mathrm{Emb}^1(B,V)$ which leaves invariant and is topologically normally hyperbolic at a closed $C^1$-submanifold $N$. Then there exists a neighbourhood $\mathcal{B}$ of $f$ in $\mathrm{Emb}^1(B,V)$ such that any $g\in\mathcal{B}$ leaves invariant and is topologically normally hyperbolic at a $C^1$-submanifold $N_g$, diffeomorphic to $N$. 
\end{theorem}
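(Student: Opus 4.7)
\medskip
\noindent\textbf{Overall strategy.}
First I would note that conditions~(\ref{hyptop}) and~(\ref{domtop}) are open in the $C^1$-topology on $\mathrm{Emb}^1(B,V)$: the compact set $f(B)$ is disjoint from the closed set $\partial^sB$ (and symmetrically for the second inclusion), and the strict cone inclusions in~(\ref{domtop}) are compact-closed. Hence for $g$ in a small neighbourhood $\mathcal{B}$ of $f$ these conditions still hold for $g$ in the \emph{same} box $B$ and with the \emph{same} cone fields $C^s,C^u$. What remains is to produce an invariant $C^1$-submanifold $N_g$ diffeomorphic to $N$. Following the program announced in the introduction, I would build separately a $g$-forward-invariant local ``unstable'' submanifold $N^u_g$, presented as a graph over $N\oplus B^u$ valued in $B^s$, and a $g$-backward-invariant local ``stable'' submanifold $N^s_g$, graph over $N\oplus B^s$ valued in $B^u$, and finally set $N_g=N^u_g\cap N^s_g$.

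\medskip
\noindent\textbf{Schauder and the graph transform.}
For $N^u_g$, let $\mathcal{S}^u$ be the set of continuous sections $\sigma$ over $N\oplus B^u$ valued in $B^s$ whose graph (a Lipschitz submanifold of $B$) has a.e.\ tangent planes inside $C^s$. The pointwise convex inequality defining $C^s$ makes $\mathcal{S}^u$ convex, and bounds the Lipschitz constant of any $\sigma\in\mathcal{S}^u$, so Arzel\`a--Ascoli shows $\mathcal{S}^u$ is $C^0$-compact. I would define the graph transform
\[ \Gamma_g:\mathcal{S}^u\to\mathcal{S}^u,\qquad \mathrm{graph}(\Gamma_g\sigma)=g(\mathrm{graph}(\sigma))\cap B, \]
after checking that the right-hand side is indeed the graph of a section over \emph{all} of $N\oplus B^u$: the stretching $B\cap g(\partial^uB)=\emptyset$ from~(\ref{hyptop}) forces $g(\mathrm{graph}(\sigma))$ to traverse $B$ completely in the $V^u$ direction and, with a degree argument (the fibrewise projection has degree one since it is the identity for $\sigma\equiv0$ and $g=f$), this gives surjectivity of the fibrewise projection to $N\oplus B^u$; the cone invariance $T_zg(C^s)\subset\mathring C^s_{g(z)}$ from~(\ref{domtop}) gives injectivity of the projection and simultaneously membership $\Gamma_g\sigma\in\mathcal{S}^u$. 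Continuity of $\Gamma_g$ in the $C^0$-topology follows from the $C^1$-regularity of $g$. Schauder's theorem then yields a fixed point $\sigma^{*}_u$, whose graph $N^u_g$ is $g$-forward-invariant. Applying the symmetric construction to $g^{-1}|_{g(B)}$ with the cone $C^u$ produces the backward-invariant $N^s_g$.

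\medskip
\noindent\textbf{Regularity, intersection, and preservation.}
At this stage $N^u_g$ is a priori only Lipschitz. To upgrade to $C^1$, I would look, at each $z\in N^u_g$, at the decreasing family $\bigcap_{k\geq 0}T_{g^{-k}(z)}g^k(C^s)$: strict inclusion in~(\ref{domtop}) together with compactness of $B$ forces this family to shrink uniformly to a single continuous $(n+u)$-plane distribution along $N^u_g$, which must coincide with the a.e.\ tangent space; a standard argument then promotes $N^u_g$ to $C^1$, and likewise for $N^s_g$. Since $C^s\cap C^u$ reduces to the horizontal direction of dimension $n$, the submanifolds $N^u_g$ and $N^s_g$ meet transversally, and $N_g=N^u_g\cap N^s_g$ is a $C^1$-submanifold of dimension $n$. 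For $g$ sufficiently close to $f$, the base projection $\pi|_{N_g}:N_g\to N$ is $C^0$-close to the identity and is a $C^1$-diffeomorphism by the implicit function theorem. Finally, transporting $B$ and the cone fields via a small tubular isotopy around $N_g$ shows that $g$ satisfies~(\ref{hyptop}) and~(\ref{domtop}) at $N_g$ in the sense of Definition~\ref{defgen}.

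\medskip
\noindent\textbf{Where the difficulty lies.}
The Schauder step itself is essentially free once its input is in place; the hard part will be verifying that $\Gamma_g$ is well-defined on $\mathcal{S}^u$, namely that the fibrewise projection of $g(\mathrm{graph}(\sigma))\cap B$ onto $N\oplus B^u$ is a genuine global homeomorphism landing inside $B^s$. This mixes a topological/degree-theoretic surjectivity argument driven by~(\ref{hyptop}) with an injectivity-plus-containment argument driven by~(\ref{domtop}), and must be performed uniformly in $\sigma\in\mathcal{S}^u$. The second delicate point, specific to the merely \emph{topological} setting of Theorem~\ref{thmhpstop}, is the $C^1$-regularity step: with no metric contraction rate at hand, the uniform shrinkage of iterated cone fields has to be extracted purely from strict inclusion together with compactness of $B$.
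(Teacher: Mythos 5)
Your proposal is correct and follows essentially the same route as the paper: openness of conditions~(\ref{hyptop})--(\ref{domtop}), a graph transform on a compact convex set of Lipschitz sections over $B^u$ (resp.\ $B^s$) fixed by Schauder, $C^1$-regularity from the shrinking intersection of iterated cone images, and finally $N_g=N^u_g\pitchfork N^s_g$. The only cosmetic differences are that you phrase the well-definedness of the graph transform as a degree argument where the paper uses a connectedness-plus-transversality cardinality count, and the paper is slightly more explicit that the \emph{linear} horizontal distribution is what makes the section space convex.
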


Under the assumptions of this theorem, for $g\in \mathcal{B}$, we will see that $N_g$ satisfies the same geometric model. In particular, $N_g$ is included in $B$ and its tangent space is in $C^s\cap C^u$. Thus, if $N$ is topologically normally hyperbolic for a certain geometric model such that $B$ and $C^s\cap C^u$ can be taken ``arbitrarily small" (when viewed as neighbourhoods of respectively $N$ and $TN$), then $N_g$ is $C^1$-close to $N$ when $g$ is $C^1$-close to $f$. In general, this stronger assumption is not any more satisfied by $N_g$. 

The above result is analogous to Theorem~\ref{thmhps}, except that we do not obtain any uniqueness statement. One can only say that for all $g\in \mathcal{B}$, $g$ has at least one invariant submanifold $N_g$ contained in the maximal invariant subset of $B$. We will give below examples for which uniqueness fails, and as we already explained, the uniqueness property of Theorem~\ref{thmhps} is known to be characteristic of normal hyperbolicity.

\paraga Let us show that this generalisation is not free by giving some examples where Theorem~\ref{thmhps} fails, whereas Theorem~\ref{thmhpstop} applies.

The most simple example is when the submanifold has dimension zero, that is when it is a fixed point. Consider the map $f : \R^2 \rightarrow \R^2$ defined by
\[ f(x,y)=(x-x^3,2y), \quad (x,y)\in \R^2 \]
which is a diffeomorphism from a neighbourhood of the origin. The origin is a fixed point, which is not hyperbolic since the differential at this point has one eigenvalue equal to one. Hence we cannot apply Theorem~\ref{thmhps}. However, the fixed point is topologically hyperbolic: for $V=\R^2$, if $B^s= [-\delta,\delta]\times\{0\}$ and $B^u=\{0\}\times[-\delta,\delta]$ for $\delta>0$, then: 
\[B=[-\delta,\delta]^2, \quad f(B)=[-\delta+\delta^3,\delta-\delta^3]\times[-2\delta,2\delta].\] 
Hence condition~(\ref{hyptop}) is easily seen to be satisfied for any $0<\delta<1$. Moreover, the coordinates axes are invariant subspaces and, with respect to the Euclidean scalar product, the cone condition~(\ref{domtop}) is plainly satisfied. Then Theorem~\ref{thmhpstop} gives the existence of at least one topologically hyperbolic fixed point in $B$, for any small $C^1$-perturbation of $f$. It is very easy to see on examples that the fixed point is in general non-unique, and moreover it can be non-isolated. Let us also add that our result not only give the existence of a topologically hyperbolic fixed point, but also the existence of local stable and unstable manifolds. 

Now a slightly less trivial example, when the submanifold is not reduced to a point, can be constructed as follows. Consider the circle diffeomorphism $b : \T \rightarrow \T$, $\T=\R/2\pi\Z$, defined by
\[ b(\theta)=\theta-\alpha\sin\theta, \quad 0<\alpha<1. \]
It has exactly two fixed points, $\theta=0$ and $\theta=\pi$ which are respectively attracting ($b'(0)=1-\alpha$) and repelling ($b'(\pi)=1+\alpha$). All other points in $\T$ are asymptotic to $0$ (respectively $\pi$) under positive (respectively negative) iterations. Consider a map $f$ which is a skew-product on $\R^2$ over $b$, of the form
\[ f : \T \times \R^2 \rightarrow \T \times \R^2, \quad f(\theta,x,y)=(b(\theta),f^s_\theta(x), f^u_\theta(y)).  \]
We assume that for some $\beta$ with $\alpha<\beta<1$,
\[ (f^s_0,f^u_0)(x,y)=((1-\beta)x,y+y^3), \quad (f^s_{\pi},f^u_{\pi})(x,y)=(x-x^3,(1+\beta)y),  \]
and we extend $f^s_\theta$ and $f^u_\theta$ for $\theta\in\T$ as follows: for every $0<\delta<1$ and $\theta\in\T$, $f^s_\theta$ and $(f^u_\theta)^{-1}$ fix $0$ and send $[-\delta,\delta]$ into $(-\delta,\delta)$ (in particular, $f$ leaves invariant the circle $\T\simeq\T\times\{(0,0)\}$), and we also ask the invariant splitting $T_{(\theta,0,0)}(\T \times \R^2)=T_\theta\T\oplus \R_x \oplus \R_y$ to be dominated. Put $V=\T \times \R^2$, $B^s=\T \times [-\delta,\delta]\times\{0\}$, $B^u=\T \times \{0\}\times[-\delta,\delta]$ and so $B=\T \times [-\delta,\delta]^2$, for some $\delta>0$. Then condition~(\ref{hyptop}) is clearly satisfied. Moreover, by the domination hypothesis, condition~(\ref{domtop}) is also satisfied with respect to the canonical Riemannian metric. Thus $\T$ is topologically normally hyperbolic but not normally hyperbolic. Therefore the invariant circle persists under small $C^1$-perturbations of $f$.

The examples we described above seem rather artificial. However, more complicated examples with a similar flavour do appear naturally in celestial mechanics (see for instance \cite{McG73} or \cite{Mos01}). We hope that our method will be useful in such situations.

\paraga To conclude, let us point out that one can easily give examples of persistent submanifolds which are not topologically normally hyperbolic. A simple example is given by the map
\[ f(x,y)=(x-x^3,y+y^3), \quad (x,y)\in \R^2, \]
which fixes the origin. Cone condition~(\ref{domtop}) is not satisfied, so our theorem cannot be applied. However, the origin is an isolated fixed point which has a non-zero index, hence by index theory (see \cite{KH97}, section $8.4$), this fixed point persists under $C^0$-perturbations. 

In general, we ask the following question:

\begin{question}
For $r\geq 1$, under which assumptions does an invariant closed $C^r$-submanifold persist under small $C^r$-perturbations ?
\end{question} 

For an isolated fixed point, index theory provides a good answer, but in general this seems quite difficult. We hope our work could be useful towards such a general answer.

\section{Proof of Theorem~\ref{thmhps} and Theorem~\ref{thmhpstop}}\label{s4}

This section is devoted to the proof of both Theorem~\ref{thmhps} and Theorem~\ref{thmhpstop}, but first we recall some useful facts. 

\paraga To our knowledge, all the previous proofs of invariant manifolds theorems use the contraction principle (or the implicit function theorem) in a suitable Banach space. Here we shall only rely on Schauder's fixed point theorem (see \cite{GD03} for a proof).

\begin{theorem}[Schauder]\label{fix}
Let $\Gamma$ be a Banach space, $K\subseteq \Gamma$ a compact convex subset and $F : K\rightarrow K$ a continuous map. Then $F$ has a fixed point.
\end{theorem}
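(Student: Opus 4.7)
The plan is to reduce Schauder's theorem to the finite-dimensional Brouwer fixed point theorem via a sequence of approximations. I would assume Brouwer's theorem (every continuous self-map of a compact convex subset of $\R^n$ has a fixed point) as known, and exploit the compactness of $K$ to produce, for each $\epsilon>0$, a finite-dimensional approximation $F_\epsilon$ of $F$ whose fixed point is an $\epsilon$-approximate fixed point of $F$; then compactness again delivers a genuine limit fixed point.

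First I would fix $\epsilon>0$ and, using compactness of $K$, cover $K$ by finitely many open balls $B(x_i,\epsilon)$, $i=1,\dots,N_\epsilon$, with $x_i\in K$. Let $K_\epsilon:=\mathrm{co}(x_1,\dots,x_{N_\epsilon})\subseteq K$, the convex hull, which is a compact convex subset of the finite-dimensional affine subspace spanned by the $x_i$. Next I would build the \emph{Schauder projection} $P_\epsilon:K\to K_\epsilon$ by the partition-of-unity formula
\begin{equation*}
P_\epsilon(x)=\frac{\sum_{i=1}^{N_\epsilon}\varphi_i(x)\,x_i}{\sum_{i=1}^{N_\epsilon}\varphi_i(x)},\qquad \varphi_i(x)=\max\bigl(0,\epsilon-\|x-x_i\|\bigr),
\end{equation*}
so that $P_\epsilon$ is continuous, takes values in $K_\epsilon\subseteq K$ (since $K$ is convex), and satisfies $\|P_\epsilon(x)-x\|\le\epsilon$ for every $x\in K$ (because in the convex combination only indices $i$ with $\|x-x_i\|<\epsilon$ contribute).

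Now I would consider the composition $F_\epsilon:=P_\epsilon\circ F|_{K_\epsilon}:K_\epsilon\to K_\epsilon$. This is a continuous self-map of a compact convex subset of a finite-dimensional normed space, so Brouwer's theorem produces a fixed point $x_\epsilon\in K_\epsilon$ with $F_\epsilon(x_\epsilon)=x_\epsilon$. By the approximation property of $P_\epsilon$,
\begin{equation*}
\|F(x_\epsilon)-x_\epsilon\|=\|F(x_\epsilon)-P_\epsilon(F(x_\epsilon))\|\le\epsilon.
\end{equation*}

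Finally, taking $\epsilon=1/n$ and using compactness of $K$, I would extract a subsequence $x_{n_k}\to x^*\in K$. Continuity of $F$ gives $F(x_{n_k})\to F(x^*)$; combined with $\|F(x_{n_k})-x_{n_k}\|\to 0$, this forces $F(x^*)=x^*$, finishing the proof. The main conceptual obstacle is the construction of the Schauder projection and the verification that it stays within $K$ (which uses convexity crucially) and approximates the identity uniformly; the rest is a standard compactness argument once Brouwer is granted, and Brouwer itself is the genuinely hard ingredient imported from algebraic topology.
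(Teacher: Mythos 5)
Your proof is correct: it is the classical reduction of Schauder's theorem to Brouwer's theorem via the Schauder projection $P_\epsilon$ built from the partition of unity $\varphi_i$, followed by the compactness/continuity limit argument, and every step (the denominator being positive since the balls cover $K$, the inclusion $K_\epsilon\subseteq K$ by convexity, the estimate $\|P_\epsilon(x)-x\|\le\epsilon$) is justified. Note that the paper itself does not prove this statement at all --- it imports it as a black box with a citation to the literature --- so your argument simply supplies the standard textbook proof that the cited reference contains; there is nothing to reconcile with the paper's (nonexistent) proof.
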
   

In fact, the hypotheses can be weakened as follows: $\Gamma$ can be replaced by a locally convex topological vector space, and $K$ needs not to be compact as long as its image by $F$ is relatively compact in $K$. However, such a greater generality will not be needed here. 

\paraga Schauder's fixed point theorem will be used to prove the existence of an invariant submanifold, but \textit{a priori} the submanifold is not continuously differentiable. For these reasons, let us say that a subset of a smooth manifold is a Lipschitz submanifold of dimension $n$ if it is locally diffeomorphic to the graph of a Lipschitz map defined on an open set of $\R^n$. 

Then, to decide whether a Lipschitz submanifold is differentiable, we will use the following notion of tangent cone. Given an arbitrary closed subset $Z \subseteq \R^m$ and $z\in Z$, the tangent cone $TC_zZ$ of $Z$ at $z$ is the set of vectors $v\in T_z\R^m$ which can be written as
\[ v=\alpha\lim_{n\rightarrow +\infty}\frac{z-z_n}{||z-z_n||}, \quad \alpha\in\R,\]
for some sequence $z_n\in Z\setminus\{z\}$ converging to $z$. This definition is clearly independent of the choice of a norm. Also, it readily extends when $\R^m$ is replaced by the $m$-dimensional manifold $M$, choosing a local chart and then checking the definition is indeed independent of the local chart used. Note that for $z\in M$, $TC_zZ$ is always a subset of $T_zM$. Of course, if $Z$ is a differentiable $n$-dimensional submanifold, then for all $z\in Z$, $TC_zZ$ is a vector subspace as it coincides with the tangent space $T_zZ$. In the case where $Z$ is a Lipschitz submanifold, then some converse statement holds true.

Indeed, let $s : \R^n \rightarrow \R^d$ be a continuous function, $Z=\mathrm{Gr}(s)\subseteq \R^{n}\times\R^d$ and $z=(x,s(x))$ with $x\in\R^n$. Under those assumptions, if $TC_zZ$ is contained in a $n$-dimensional subspace $D_z$ of $\R^n\times\R^d$ which is transverse to $\{0\}\times\R^d$, then $D_z$ is the graph of a linear map $L_{z,s} : \R^n \rightarrow \R^d$ and we can easily prove (see \cite{Fle80} for instance) that $s$ is in fact differentiable at $x$, with $T_xs=L_{z,s}$. Moreover, if $s$ is Lipschitz, then the assumption that $D_z$ is transverse to $\{0\}\times\R^d$ is automatic, and this immediately gives the following: 

\begin{lemma}\label{diff}
Let $Z$ be a Lipschitz  submanifold of dimension $n$. If for every $z\in Z$, the tangent cone $TC_zZ$ is contained in an $n$-dimensional space $D_z$, then $Z$ is a differentiable submanifold with $T_zZ=D_z$. If moreover $z\mapsto D_z$ is continuous, then $Z$ is of class $C^1$.
\end{lemma}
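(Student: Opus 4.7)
The plan is to reduce the statement to a local assertion about the Lipschitz graph representation of $Z$, and then derive differentiability from the tangent cone condition by a compactness argument. First, I would work locally around an arbitrary $z_0 \in Z$. By definition of a Lipschitz submanifold of dimension $n$, a smooth local chart around $z_0$ identifies a neighbourhood of $z_0$ in $Z$ with the graph $\mathrm{Gr}(s)$ of an $L$-Lipschitz map $s : U \to \R^d$ defined on an open subset $U \subset \R^n$, with $n+d=m$. Fix any point $z=(x,s(x))$ and let $V=\{0\}\times\R^d$ denote the vertical subspace.

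The second step is to show that the Lipschitz property forces $D_z$ to be transverse to $V$. For any unit direction $u\in\R^n$, the sequence $x_k=x+u/k$ yields points $z_k=(x_k,s(x_k))\in Z$ with $1/k \leq \|z_k-z\| \leq (1+L)/k$. Passing to a subsequence, $(z_k-z)/\|z_k-z\|$ converges to a vector in $TC_z Z$ whose projection onto $\R^n$ is a positive multiple of $u$. Hence the projection of $TC_z Z$ onto $\R^n$ surjects onto $\R^n$; since $TC_z Z\subset D_z$ and $\dim D_z=n$, the projection $D_z\to\R^n$ is a linear isomorphism. Thus $D_z$ is transverse to $V$, so it is the graph of a unique linear map $A=L_{z,s}:\R^n\to\R^d$.

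The third step is to prove that $s$ is differentiable at $x$ with $T_x s=A$. Suppose for contradiction that there exist $\epsilon>0$ and a sequence $x_k\to x$ in $U$ with $\|s(x_k)-s(x)-A(x_k-x)\|\geq\epsilon\|x_k-x\|$. The Lipschitz bound gives $\|z_k-z\|\leq(1+L)\|x_k-x\|$, so by compactness a subsequence of $(z_k-z)/\|z_k-z\|$ converges to a unit vector $(u,w)\in TC_zZ\subset D_z=\mathrm{Gr}(A)$, forcing $w=Au$. On the one hand
\[
\frac{s(x_k)-s(x)-A(x_k-x)}{\|z_k-z\|}\longrightarrow w-Au=0,
\]
while on the other hand its norm is bounded below by $\epsilon\|x_k-x\|/\|z_k-z\|\geq\epsilon/(1+L)$, a contradiction. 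This shows $s$ is differentiable at every $x$, so $Z$ is a differentiable submanifold with $T_zZ=D_z$.

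Finally, the $C^1$ statement is essentially formal. In any local trivialisation of the Grassmannian bundle $G_n(TM)$ over the chart domain, the set of $n$-planes transverse to $V$ is open and the assignment sending such a plane to the linear map whose graph it is, is smooth. Composing this with the continuous map $z\mapsto D_z$ yields that $x\mapsto T_x s$ is continuous, so $Z$ is of class $C^1$. The main obstacle is the middle step: extracting simultaneously the transversality of $D_z$ and the differentiability of $s$ from the bare inclusion $TC_zZ\subset D_z$, where the Lipschitz estimate must be used sharply both to populate the tangent cone with enough directions and to compare $\|z_k-z\|$ with $\|x_k-x\|$ in the contradiction argument.
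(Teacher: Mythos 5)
Your proof is correct and follows essentially the same route as the paper: reduce to a local Lipschitz graph $\mathrm{Gr}(s)$, observe that the Lipschitz bound forces $D_z$ to be transverse to the vertical $\{0\}\times\R^d$ (hence the graph of a linear map), and deduce differentiability of $s$ from $TC_zZ\subseteq D_z$. The only difference is that you supply the compactness/contradiction argument for the last step, which the paper simply cites from \cite{Fle80}; your version of it, including the two-sided comparison $\|x_k-x\|\leq\|z_k-z\|\leq(1+L)\|x_k-x\|$, is complete and correct.
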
 

\paraga \begin{proof}[Proof of Theorem~\ref{thmhpstop}]
For a clearer exposition, we will first prove the theorem when $N$ satisfies a simple geometric model and is topologically normally contracted, that is $V=N \times \R^s$. The main ideas are already present in this simple situation. In the general case, where $N$ satisfies a general geometric model and is topologically normally hyperbolic, similar arguments will enable us to construct local stable and unstable manifolds and show their persistence. Then the invariant submanifold will be their transverse intersection. The proof is divided into three steps. The first two steps show the existence and the smoothness of the invariant submanifold in the simple case. The last step is devoted to the general case.

\medskip

\noindent\emph{Step 1: existence.}

\medskip

Recall that $B^s$ denotes a compact convex neighbourhood of $0$ in $\R^s$ and $f$ is a diffeomorphism from $B=N\times B^s$ onto its image in $V=N\times \R^s$. A Riemannian metric on $V$ is given such that conditions~(\ref{hyptop}) and~(\ref{domtop}) are satisfied. As $\partial^u B$ is empty, condition~(\ref{hyptop}) is equivalent to 
\begin{equation}\label{vois}
f(B)\subseteq \mathring{B}.
\end{equation}
Indeed, an easy connectedness argument implies property~(\ref{vois}): since $f(B)\cap \partial B=\emptyset$, the set $f(B) \cap B=f(B) \cap \mathring{B}$ is both open and closed for the topology induced on $f(B)$, moreover it is non-empty (it contains $N$ which is invariant by $f$). As $f(B)$ is connected, it follows that $f(B) \cap \mathring{B}=f(B)$ and therefore $f(B)\subseteq \mathring{B}$. 

Let us note that properties~(\ref{vois}) and~(\ref{domtop}) remain true if we replace $f$ by a $C^1$-embedding $g$ which is $C^1$-close to $f$. In other words, there exists a small neighbourhood $\mathcal{B}$ of $f$ in $\mathrm{Emb}^1(B,V)$ such that for any $g\in\mathcal{B}$,
\begin{equation}\label{condg}
g(B)\subseteq \mathring{B}, \quad T_zg(C_z^s)\subseteq \mathring{C}_{g(z)}^{s}\cup\{0\}, \quad z\in B.
\end{equation}
It sounds natural to consider the set of closed $n$-dimensional $C^1$-submanifolds $\tilde{N}$, contained in $B$ and with a tangent bundle $T\tilde{N}$ contained in $C^s$. Indeed, by~(\ref{condg}), any $C^1$-diffeomorphism $g \in \mathcal{B}$ sends this set into itself and the existence of a fixed point for this action of $g$ would give the desired invariant submanifold. Nevertheless, this set lacks compactness. 

Therefore, we consider the set $\mathcal{S}$ of Lipschitz closed $n$-dimensional submanifolds $\tilde{N}$, contained in $B$ and with a tangent cone $TC\tilde{N}$ contained in $C^s$. The action of $g\in \mathcal{B}$ on this set is 
\[ \mathcal{G} : \mathcal{S} \rightarrow \mathcal{S}, \quad \mathcal{G}(\tilde{N})=g(\tilde{N}).\]      
If $\tilde{N}$ is a Lipschitz, closed submanifold of dimension $n$, then so is $g(\tilde{N})$. By the property~(\ref{condg}), the map $\mathcal{G}$ is well-defined. 

To apply Schauder's fixed point theorem, we need to exhibit a linear structure and to do so we will restrict the map $\mathcal{G}$ to a proper $\mathcal{G}$-invariant subset of $\mathcal{S}$ which, roughly speaking, consists of Lipschitz submanifolds which are graphs over $N$. 

Let $\Gamma$ be the space of continuous sections of the trivial vector bundle $N\times \mathbb R^s \rightarrow N$. Any continuous section $\sigma\in \Gamma$ is of the form $\sigma(x)=(x,s(x))$, for a continuous function $s: N \rightarrow \R^s$. Equipped with the $C^0$-norm, $\Gamma$ is a Banach space. Let us define the Lipschitz constant of a section $\sigma\in \Gamma$ at $x\in N$ by
\[ \mathrm{Lip}_x(\sigma)=\limsup_{y\rightarrow x, \; y\in N\setminus\{x\}}\frac{||s(y)-s(x)||_x}{d(y,x)}. \]
It is not hard to check that for $\sigma\in \Gamma$, $\sigma(N)$ belongs to $\mathcal{S}$ if and only if $\sigma(x)$ belongs to $B^s$ and $\mathrm{Lip}_x(\sigma)\leq 1$ for every $x \in N$. So we consider the subset 
\[ K=\{\sigma\in \Gamma \; | \; \forall x\in N, \; \sigma(x)\in B^s, \; \mathrm{Lip}_x(\sigma)\leq 1\}. \]
This subset $K$ is convex, and it is compact by the Arzel\`a-Ascoli theorem. 

Let us show that for any $\sigma \in K$,  $\mathcal{G}(\sigma(N))=\tilde{\sigma}(N)$ for some other $\tilde{\sigma}\in K$. As we already know that $\sigma(N)\in \mathcal{S}$ for $\sigma \in K$, it remains to show that $\mathcal{G}$ preserves this graph property, and to do so we will restrict $\mathcal{B}$ to a connected neighbourhood of $f$. Indeed, by the cone condition, for every $x\in N$, the plane $F_x=\{x\}\times \mathbb R^s$ is a manifold which intersects transversally any $C^1$-manifold of $\mathcal{S}$. Thus it intersects transversally $\mathcal{G}(\sigma'(N))$, for every $g\in \mathcal{B}$ and $\sigma'\in K$ of class $C^1$. By connectedness and transversality, the intersection $\mathcal{G}(\sigma'(N)) \cap F_x$ is a unique point, since it is the case for $g=f$ and $\sigma'=0$. Now for any $\sigma \in K$, we can approximate $\sigma$ by a $C^1$-section $\sigma' \in K$ in the $C^0$-topology. This implies that, for any $g\in \mathcal{B}$, $\mathcal{G}(\sigma(N))$ is close to $\mathcal{G}(\sigma'(N))$ for the Hausdorff topology, and by the cone condition, one can check that $\mathcal{G}(\sigma(N)) \cap F_x$ remains close to $\mathcal{G}(\sigma'(N)) \cap F_x$, for every $x\in N$. Since we know that the latter set is reduced to a point, $\mathcal{G}(\sigma(N)) \cap F_x$ has to be reduced to a point too. This shows that $\mathcal{G}(\sigma(N))$ is still a graph, that is $\mathcal{G}(\sigma(N))=\tilde{\sigma}(N)$ for some other $\tilde{\sigma}\in K$.    

Therefore $\mathcal{G}$ induces a map on $K$, which is obviously continuous, and as $K$ is compact and convex, by Theorem~\ref{fix} this induced map has a fixed point $\sigma_g$. Then $N_g=\sigma_g(N)$ is a $n$-dimensional Lipschitz submanifold, contained in $S$ and invariant by $g$.

\medskip

\noindent\emph{Step 2: smoothness.}

\medskip

So far we have shown the existence of a Lipschitz invariant submanifold $N_g$. Let us prove its differentiability. The cone condition implies that for any $z$ in the maximal invariant subset of $B$,
\[ D_z=\bigcap_{k\geq 0} T_{g^{-k}(z)}g^k(C^s_{g^{-k}(z)}) \subseteq C_z^s \] 
is a $n$-dimensional subspace of $T_zM$ (see \cite{New04} for instance). The  invariance of $N_g$ implies the invariance of its tangent cone under the tangent map $Tg$ and therefore
\[ TC_zN_g= \bigcap_{k\geq 0} T_{g^{-k}(z)}g^k(TC_{g^{-k}(z)}N_g). \]
By construction, $TCN_g \subseteq C^s$, and this implies $TCN_g \subseteq D$. By Lemma~\ref{diff}, the submanifold $N_g$ is differentiable, with $T_zN_g=D_z$ for $z\in N_g$.

Let us prove that $N_g$ is continuously differentiable. Given a convergent sequence $z_n \rightarrow z$ in $N_g$, we need to show that $D_{z_n}=T_{z_n}N_g$ converges to $D_z=T_zN_g$. By compactness of the Grassmannian, it is equivalent to prove that $D_z$ is the only accumulation point of $D_{z_n}$. So let $D'_z$ be an accumulation point of $D_{z_n}$. By continuity of $C_z^s$, $D'_z$ is included in $C_z^s$. For every $k\geq 0$, by continuity of $Tg^{-k}$, $T_zg^{-k} (D'_z)$ is also an accumulation point of $T_{z_n}g^{-k}(D_{z_n})=T_{g^{-k}(z_n)}N_g$ which is then included in $C_{g^{-k}(z)}^{s}$ for the same reasons. Thus $D'_z$ is included in $\bigcap_{k\geq 0} T_{g^{-k}(z)}g^k(C^s_{g^{-k}(z)})=D_z$, and since they have the same dimension, we conclude that $D'_z=D_z$. Therefore $N_g$ is continuously differentiable.    

\medskip

\noindent\emph{Step 3: general case.}

\medskip

Now let us return to the general case where $N$ is topologically normally hyperbolic, and let us work with the general geometric model 
\[ B=B^u\oplus B^s\subset V=V^u\oplus V^s\rightarrow N.\] 
A horizontal linear distribution $H$ for $\pi : V \rightarrow N$ and a Riemannian metric on $V$ are fixed, and conditions~(\ref{hyptop}) and~(\ref{domtop}) are satisfied.

We recall that topological normal hyperbolicity is an open condition on the elements involved. Therefore, for every $g$ in a neighbourhood $\mathcal{B}$ of $f$, condition~(\ref{hyptop}) gives 
\begin{equation}\label{vois2}
g(B) \cap \partial^sB=\emptyset, \quad B \cap g(\partial^u B)=\emptyset,
\end{equation} 
and the cone condition~(\ref{domtop}) gives
\begin{equation}\label{cone2}
T_zg(C_z^{s})\subseteq \mathring{C}_{g(z)}^{s}\cup\{0\}, \quad C_{g(z)}^{u}\subseteq T_zg(\mathring{C}_z^{s})\cup\{0\},
\end{equation}
for every $z\in B$. 

As before, we will only use properties~(\ref{vois2}) and~(\ref{cone2}). Let $\mathcal{S}^u$ be the set of Lipschitz, compact $(n+u)$-dimensional submanifolds $\tilde{N}^u$, contained in $B$, with a topological boundary contained in $\partial^uB$, and with a tangent cone $TC\tilde{N}^u$ contained in $C^s$. For convenience, we add the empty set to $\mathcal{S}^u$. Now for $g\in\mathcal{B}$, we put
\[ \mathcal{G}^u : \mathcal{S}^u \rightarrow \mathcal{S}^u, \quad \mathcal{G}^u(\tilde{N}^u)=g(\tilde{N}^u) \cap B.\] 
If $\mathcal{G}^u(\tilde{N}^u)$ is empty, then the map is trivially well-defined at $\tilde{N}^u$. Otherwise, $\mathcal{G}^u(\tilde{N}^u)$ is a Lipschitz and compact $(n+u)$-dimensional. By~(\ref{cone2}), if the tangent cone of $\tilde{N}^u$ is contained in  $C^s$, then the same holds true for $\mathcal{G}^u(\tilde{N}^u)$. So to show that $\mathcal{G}^u(\tilde{N}^u)$ is well-defined we only need to check the boundary condition, and this will be a consequence of~(\ref{vois2}). Note that the boundary of $\mathcal{G}^u(\tilde{N}^u)$ is the union of $g(\partial\tilde{N}^u) \cap B$ and $g(\tilde{N}^u) \cap \partial B$. Since $\partial\tilde{N}^u \subseteq \partial^uB$ and $g(\partial^u B) \cap B =\emptyset$, then $g(\partial\tilde{N}^u) \cap B$ is empty so the boundary of $\mathcal{G}^u(\tilde{N}^u)$ actually reduces to $g(\tilde{N}^u) \cap \partial B$. As $g(B) \cap \partial^sB=\emptyset$, the boundary of $\mathcal{G}^u(\tilde{N}^u)$ is included in $\partial^uB$. Therefore $\mathcal{G}^u$ is a well-defined map.      

Consider the bundle $\pi^u : V^s \oplus B^u \rightarrow B^u$, whose fibre at $\zeta\in B^u$ is the $s$-dimensional affine subspace $F_\zeta=\{v+\zeta \; | \; v\in V^s_{\pi(\zeta)}\}$ of $V^s_{\pi(\zeta)} \oplus V^u_{\pi(\zeta)}$. Let us denote by $\Gamma^u$ the associated space of continuous sections, which is a Banach space, endowed with the $C^0$-norm. Recall that the vector bundle $\pi : V^s \oplus V^u \rightarrow N$ is equipped with a linear horizontal distribution $H$, and any $x=\pi(z) \in N$ is contained in a local trivialisation $W \subseteq N$ such that
\[\phi:\pi^{-1}(W)\rightarrow W\times \R^u\times \R^s\]
is a diffeomorphism satisfying $T_z\phi(H_z)= T_xW \times \{0\}$. Any section $\sigma^u \in \Gamma^u$ satisfies, for $\zeta \in B^u$ with $\pi(\zeta)\in W$, $\phi\circ \sigma^u(\zeta)=(\zeta,s(\zeta))$ with $s : B^u \rightarrow \R^s$ a continuous function. Let us define the Lipschitz constant of an element $\sigma^u\in \Gamma^u$ at $\zeta\in B^u$ by
\[ \mathrm{Lip}_\zeta(\sigma^u)=\limsup_{\xi\rightarrow \zeta, \; \xi\in B^u\setminus\{\zeta\}}\frac{||s(\xi)-s(\zeta)||_\zeta}{d(\xi,\zeta)}. \]
It is not hard to check that for $\sigma^u\in \Gamma^u$, $\sigma^u(B^u)$ belongs to $\mathcal{S}^u$ if and only if $\sigma^u(\zeta)$ belongs to $B_\zeta=\{v+\zeta \; | \; v\in B_{\pi(\zeta)}^s\}$ and $\mathrm{Lip}_\zeta(\sigma^u)\leq 1$ for every $\zeta \in B^u$. So we consider the subset 
\[ K^u=\{\sigma^u\in \Gamma^u \; | \; \forall \zeta\in B^u, \; \sigma^u(\zeta)\in B_\zeta, \; \mathrm{Lip}_\zeta(\sigma^u)\leq 1 \}. \]
This subset is compact by the Arzel\`a-Ascoli theorem, and it is also convex (the linearity of the horizontal distribution, that we used here through the existence of a distinguished local trivialisation, is necessary to obtain the convexity). 

By the boundary condition and the transversality given by cone condition~(\ref{cone2}), the cardinality of $f(B^u)\cap F_\zeta$ does not depend on $\zeta\in \mathring B^u$. When $\zeta$ lies in $N\subset \mathring B^u$, this cardinality is at least one since $f(B^u)$ contains $N=f(N)$. For every $\zeta\in V^u$, the intersection of $F_\zeta$ with $N$ is at most one point, so by transversality, it is also the case for the intersection with a small connected neighbourhood of $N$ in $f(B^u)$. By enlarging such a neighbourhood, transversality implies that $F_\zeta\cap f(B^u)$ is at most one point, for $\zeta\in V^u$. Therefore $F_\zeta\cap f(B^u)$ is exactly one point, for $\zeta\in B^u$. This proves that $\mathcal{G}^u(\sigma^u(B^u))$ is still a graph over $B^u$, for $g=f$ and $\sigma^u=0$. Now a similar argument as before shows that $\mathcal{G}^u$ induces a continuous map on $K^u$.

Hence we can repeat the first two steps we described above, and we find that $\mathcal{G}^u$ has a fixed point $N^u_g$, which is the graph of a $C^1$-section $\sigma_g^u : B^u \rightarrow B^u \oplus B^s$. The submanifold $N^u_g$ is a local unstable manifold, and since it is a fixed point of $\mathcal{G}^u$, it is locally invariant in the sense that $g(N^u_g)\cap B=N^u_g$.

Obviously, using $C^u$ and replacing $g$ by $g^{-1}$, we can define another set $\mathcal{S}^s$ and a map $\mathcal{G}^s : \mathcal{S}^s \rightarrow \mathcal{S}^s$ which has a fixed point $N^s_g$, given by the graph of a $C^1$-section $\sigma_g^s : B^s \rightarrow  B^u \oplus B^s$. The submanifold $N^s_g$ is a local stable manifold, and it is locally invariant in the sense that $g^{-1}(N^s_g)\cap B=N^s_g$.

The transverse intersection $N_g=N^u_g \pitchfork N^s_g$ is an $n$-dimensional closed submanifold, invariant by $g$ and topologically normally hyperbolic with the same geometric model. This accomplishes the proof.
\end{proof}

\paraga \begin{proof}[Proof of Theorem~\ref{thmhps}]

The proof is divided into two steps. In the first step, we will construct a geometric model for normally hyperbolic submanifolds to show that they are topologically normally hyperbolic. Using Theorem~\ref{thmhpstop}, this will immediately give us the existence and smoothness of the invariant submanifold. We shall also notice that an arbitrarily small geometric model can be constructed, so that in addition the invariant submanifold will be $C^1$-close to the unperturbed submanifold. Then, in the second step, we will fully use the hypotheses of normal hyperbolicity to prove that the invariant submanifold is uniformly locally maximal.

\medskip

\noindent\emph{Step 1: existence and smoothness.}

\medskip

First it is enough to consider the case where $N$ is a smooth submanifold of $M$. Indeed, there exists a  $C^1$-diffeomorphism $\phi$ of $M$ such that $\phi(N)$ is a smooth submanifold of $M$ (see \cite{Hir76}, Theorem 3$.6$). The resulting metric on $M$ is then only $C^1$, but replacing it by a smooth approximation, the submanifold $\phi(N)$, which is invariant by $\phi f\phi^{-1}$, remains normally hyperbolic for this smooth metric (up to taking $\lambda$ slightly larger).

So from now $N$ is assumed to be smooth. By definition, there is a continuous, $Tf$-invariant splitting $ TN\oplus E^s \oplus E^u$ of $TM$ over $N$. The plane field $TN$ is smooth, but $E^s$ and $E^u$ are in general only continuous, so we regard smooth approximations $V^s$ and $V^u$ of them. In particular the sum $TN \oplus V^s\oplus V^u$ is direct and equal to the restriction of $TM$ to $N$. Note that $V^s$ and $V^u$ are no longer $Tf$-invariant. However, given $\gamma>0$, by taking $V^s$ and $V^u$ sufficiently close to $E^s$ and $E^u$, if we define
\[\chi_x^u=\{v=v_1+v_2\in T_xM \; | \; v_1\in T_xN\oplus V_x^s, v_2\in V_x^u,\|v_2\|_x\le \gamma\|v_1\|_x\},\]
\[\chi_x^s=\{v=v_1+v_2\in T_xM \; | \; v_1\in T_xN\oplus V_x^u, v_2\in V_x^s,\|v_2\|_x\le \gamma\|v_1\|_x\},\]  
then the following cone property is satisfied for $x\in N$:
\begin{equation}\label{champchi}
T_xf(\chi_x^s)\subset \mathring \chi_{f(x)}^s\cup \{0\}, \quad \chi_{f(x)}^u\subset T_xf(\mathring \chi_x^u)\cup \{0\}. 
\end{equation}

The plane fields $V^s$ and $V^u$ define a smooth vector bundle $V=V^s\oplus V^u\rightarrow N$ of dimension $m$ such that the fibre at $x\in N$ is the vector space $V_x^s\oplus V_x^u$. We identify the zero section of this bundle to $N$. Since $N$ is compact, by the tubular neighbourhood theorem, there exits a diffeomorphism $\Psi$ of $V$ onto an open neighbourhood $O$ of $N$ in $M$ which is the identity when restricted to the zero section.
Let $B^s\rightarrow N$ and $B^u\rightarrow N$ be two bundles whose fibres $B_x^s$ and $B_x^u$ at $x\in N$, are convex, compact neighbourhoods of $0$ in $V_x^s$ and $V_x^u$. Let $B=B^s\oplus B^u$, and define $\Psi(B)=U$ which is a compact neighbourhood of $N$ in $M$, included in $O$. We consider the Riemannian metric on $V$ obtained by pulling-back the restriction to $O$ of the Riemannian metric on $M$.
 
Let us fix a linear horizontal distribution $(H_z)_{z\in V^s\oplus V^u}$, and consider the following cone fields over $V$:
\[ C_z^u=\{v=v_1+v_2\in T_zV \; | \; v_1\in H_z\oplus V_x^s, v_2\in V_x^u,\|v_2\|_z\le \gamma\|v_1\|_z\},\]
\[ C_z^s=\{v=v_1+v_2\in T_zV \; | \; v_1\in H_z\oplus V_x^u, v_2\in V_x^s,\|v_2\|_z\le \gamma\|v_1\|_z\},\]    
for any $z\in V$ and $x=\pi(z)\in N$. Restricting $B$ if necessary, cone property~(\ref{champchi}) implies that $f'=\Psi^{-1}f\Psi$ satisfies condition~(\ref{domtop}). Indeed, by rescaling the metric, we can define exactly the same cone fields with $\gamma=1$. Furthermore, we can choose this metric such that every vector in the complement of $\chi_x^s$ (respectively $\chi_x^u$) is contracted by $T_xf$ (respectively by $T_xf^{-1}$). Then it is easy to see that for $B$ small enough, condition~(\ref{hyptop}) is also satisfied. 

Therefore a normally hyperbolic submanifold is topologically normally hyperbolic, and Theorem~\ref{thmhpstop} can be applied: there exists a neighbourhood $\mathcal{B}$ of $f'$ in $\mathrm{Emb}^1(B,V)$ such that any $g'\in\mathcal{B}$ leaves invariant and is topologically normally hyperbolic at a $C^1$-submanifold $N_{g'}$, diffeomorphic to $N$. This gives us a neighbourhood $\mathcal{U}$ of $f$ in $\mathrm{Diff}^1(M)$ such that any $g\in\mathcal{U}$ leaves invariant and is topologically normally hyperbolic at a $C^1$-submanifold $N_g$, diffeomorphic to $N$. Moreover, $N_g \subseteq U$ and $TN_g \subseteq T\Psi(C^s \cap C^u)$, and since here one has the freedom to choose $U$ and $C^s \cap C^u$ arbitrarily small (that is, $U$ and $\gamma>0$ can be taken arbitrarily small), $N_g$ is $C^1$-close to $N$. Moreover, it is easy to check that since $N$ is normally hyperbolic, then $N_g$ is not only topologically normally hyperbolic but also normally hyperbolic.

\medskip

\noindent\emph{Step 2: uniqueness.}

\medskip

Now it remains to show that $N_g=\bigcap_{k\in \Z}g^k(U)$. Recall that $N_g=N_g^u \cap N_g^s$, where $N_g^u$ and $N_g^s$ are respectively the local unstable and stable manifolds. It is enough to show that $N_g^u=\bigcap_{k\geq 0}g^k(U)$, since an analogous argument will show that $N_g^s=\bigcap_{k\leq 0}g^k(U)$ and so that $N_g=\bigcap_{k\in \Z}g^k(U)$. 

So let us prove that $N_g^u=\bigcap_{k\geq 0}g^k(U)$ which is of course equivalent to $N_{g'}^u=\bigcap_{k\geq 0}g'^k(B)$, with $g'=\Psi^{-1}g\Psi$. The inclusion $\subseteq$ is obvious. For the other one, take $z\in\bigcap_{k\geq 0}g'^k(B)$. Then, for every $k\geq 0$, $g'^{-k}(z)$ belongs to $B$. The point $z$ belongs to $F_\zeta=\{v+\zeta \; | \; v\in V^s_{\pi(\zeta)}\}$, for a $\zeta \in B^u$. The disk $F_\zeta$ has its tangent space in the complement of $\mathring C^s$. By stability of $C^s$ under $Tg'$, the disk $g'^{-1}(F_\zeta)$ has also its tangent space in the complement of $\mathring C^s$. By condition~(\ref{hyptop}), this disk does not intersect $\partial^u B$, and by~(\ref{domtop}), it comes that $F_\zeta^1= B\cap g'^{-1}(F_\zeta)$ is connected. As $N_{g'}^u$ contains $g'^{-1}(N^u_{g'}$) and $F_\zeta$ intersects $N^u_{g'}$, the submanifold $F_\zeta^1$ contains both $g'^{-1}(z)$ and a point of $N_{g'}^u$. Applying the same argument $k$ times, it comes that the preimage $F_\zeta^k$ of $F_\zeta$ by $(g_{|B}')^k$ is a disk with tangent space in the complement of $\mathring C^s$, intersecting $N^u_{g'}$ and containing $g'^{-k}(z)$. Thus the diameter of $F_\zeta^k$ is bounded by a constant $c>0$ which depends only on $\gamma$ and the diameter of $B^s_x$, $x\in N$.  

As $N_{g'}$ is normally hyperbolic, the vectors in the complement of $\mathring C^s$ are contracted by an iterate of $Tg'$. Therefore, for $B$ and $\mathcal{B}$ sufficiently small, $F^k_{\zeta}$ is contracted by $g'^k$. Letting $k$ goes to infinity, the distance between $z$ and $N_{g'}$ goes to zero and hence $z\in N_{g'}$. This ends the proof.   
\end{proof}

{\it Acknowledgments. The authors are grateful to the hospitality of IMPA. This work has been partially supported by the Balzan Research Project of J. Palis at IMPA.} 
\addcontentsline{toc}{section}{References}
\bibliographystyle{amsalpha}
\bibliography{hps3}

\end{document}